\newcommand{\E}{\mathbb{E}} 
\newcommand{\R}{\mathbb{R}}
\newcommand{\F}{\mathbb{F}}
\newcommand{\1}{\mathds{1}}
\newcommand{\nd}{\text{ and }}
\newcommand{\supp}{\mathrm{supp}}
\newcommand{\ip}[1]{\left\langle #1 \right\rangle}
\newcommand{\eps}{\varepsilon} 
\renewcommand{\epsilon}{\varepsilon} 
\renewcommand{\leq}{\leqslant} 
\renewcommand{\geq}{\geqslant}
\newtheorem{theorem}{Theorem}[section]
\newtheorem*{theorem*}{Theorem}
\newtheorem*{conjecture*}{Conjecture}
\newtheorem*{proposition*}{Proposition}
\newtheorem{lemma}[theorem]{Lemma}
\newtheorem*{lemma*}{Lemma}
\newtheorem*{corollary*}{Corollary}
\newtheorem*{example*}{Example}
\newtheorem*{claim*}{Claim}
\theoremstyle{definition}
\newtheorem{definition}[theorem]{Definition}
\newtheorem*{definition*}{Definition}
\theoremstyle{remark}
\newtheorem*{remark*}{Remark}
\newtheorem*{question*}{Question}
\numberwithin{equation}{section}
\title{\vspace{-0.7cm}Functions with large additive energy supported on a Hamming Sphere}
\author{James Aaronson}
\email{james.aaronson@maths.ox.ac.uk}
\date{}
\begin{document}

\begin{abstract}
In this note, we prove that, among functions $f$ supported on a Hamming Sphere in $\F_2^n$ with fixed $\ell^2$ norm, the additive energy is maximised when $f$ is constant. This answers a question asked by Kirshner and Samorodnitsky.
\end{abstract}

\maketitle

\section{Introduction}\label{sec:intro}

For a function $f : \F_2^n \rightarrow \R$, we define its Gowers $u_2$ norm to be
\begin{equation}\label{eqn:Gowers_norm_definition}
||f||_{u_2} = \left(\E_{\substack{a_1, a_2, a_3, a_4 \in \F_2^n \\ a_1 + a_2 = a_3 + a_4}} f(a_1) f(a_2) f(a_3) f(a_4)\right)^{1/4}.
\end{equation}
This is also called the \emph{additive energy} of $f$. This agrees with the usual notion of energy for sets (up to a scaling), in the sense that $||\1_{A}||_{u_2}^4 = \frac{E(A)}{N^3}$, where $N = 2^n$ is the size of $\F_2^n$. Similarly, we will define the $\ell^2$ norm to be
\[
||f||_{2} = \left(\E_{a \in \F_2^n} f(a)^2\right)^{1/2}.
\]

For a set $A \subseteq \F_2^n$, define $\mu(A)$ by
\begin{equation}\label{eqn:mu_definition}
\mu(A) = \max_{\substack{f : \F_2^n \rightarrow \R \\ \supp{f} \subseteq A }} \frac{||f||_{u_2}^4}{||f||_2^4},
\end{equation}
where $\supp(f)$ denotes the support of $f$.

Let the Hamming Sphere $S(n, k) \subseteq \F_2^n$ consist of those vectors of weight $k$; in other words, $S(n, k)$ consists of those vectors with exactly $k$ ones.

In \cite{KSpaper}, Kirshner and Samorodnitsky made the following conjecture:
\begin{conjecture*}[Conjecture 1.9 from \cite{KSpaper}]\leavevmode
Let $A = S(n, k)$. Then, $\mu(A) = \frac{1}{N}\frac{E(A)}{|A|^2}$. 

In other words, the ratio $\frac{||f||_{u_2}^4}{||f||_2^4}$ achieves its maximum when $f$ is constant.
\end{conjecture*}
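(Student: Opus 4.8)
The plan is to recast the conjectured identity as a sharp moment inequality for polynomials on the discrete cube, and then to attack that by induction on $n$.

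\smallskip
\noindent\textbf{Reduction to polynomials.} Since $\mu(A)\geq\frac1N\frac{E(A)}{|A|^2}$ is immediate upon testing $f=\1_A$, only the reverse inequality needs proof: $\|f\|_{u_2}^4\leq\frac1N\frac{E(A)}{|A|^2}\|f\|_2^4$ for all $f$ supported on $A=S(n,k)$. Write $f(y)=c_y$ for $|y|=k$ and $f(y)=0$ otherwise. Then $\widehat f(\xi)=\frac1N\sum_{|y|=k}c_y(-1)^{y\cdot\xi}$, and substituting $z_i=(-1)^{\xi_i}$ turns this into $\widehat f=\frac1NP$ where $P(z)=\sum_{|y|=k}c_y\prod_{i\in y}z_i$ is a homogeneous multilinear polynomial of degree $k$ on $\{-1,1\}^n$. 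Using $\|f\|_{u_2}^4=\sum_\xi\widehat f(\xi)^4$ together with Parseval, one gets $\|f\|_{u_2}^4=N^{-3}\,\E_z[P(z)^4]$ and $\|f\|_2^4=N^{-2}\,\E_z[P(z)^2]^2$. As $c\mapsto P$ is a bijection onto all homogeneous degree-$k$ multilinear polynomials, and $f=\1_{S(n,k)}$ corresponds to the elementary symmetric polynomial $P=e_k$ (for which $\E[e_k^2]=\binom nk=|A|$ and $\E[e_k^4]=E(A)$, the latter since $z^{S_1}z^{S_2}z^{S_3}z^{S_4}=1$ exactly when $S_1+S_2+S_3+S_4=0$), the conjecture is \emph{equivalent} to: for every homogeneous multilinear $P$ of degree $k$ on $\{-1,1\}^n$,
\[
\E_z\!\left[P(z)^4\right]\ \leq\ \frac{E(S(n,k))}{\binom nk^2}\,\E_z\!\left[P(z)^2\right]^2,
\]
with $e_k$ (equivalently, a Krawtchouk polynomial $K_k(|\xi|)$) an extremizer. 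This is strictly sharper than the constant given by Bonami–Beckner hypercontractivity, so a new argument is required.

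\smallskip
\noindent\textbf{Induction on $n$.} Split off the last coordinate: $P(z)=z_nQ(z_1,\dots,z_{n-1})+R(z_1,\dots,z_{n-1})$, with $Q$ homogeneous of degree $k-1$ and $R$ homogeneous of degree $k$. Using $z_n^2=1$ and averaging over $z_n$,
\[
\E[P^2]=\E[Q^2]+\E[R^2],\qquad \E[P^4]=\E[Q^4]+\E[R^4]+6\,\E[Q^2R^2],
\]
the remaining expectations being over $\{-1,1\}^{n-1}$. The inductive hypothesis controls $\E[Q^4]$ and $\E[R^4]$; substituting these, the target inequality reduces to an elementary quadratic inequality in $\bigl(\E[Q^2],\E[R^2]\bigr)$ — \emph{provided} one has a sharp bound on the cross term $\E[Q^2R^2]=\|QR\|_2^2$. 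The relevant arithmetic facts making that quadratic hold with equality in the extremal direction are the recursions obtained by splitting the quadruples over their last coordinate, e.g. $E(S(n,k))=E(S(n-1,k))+E(S(n-1,k-1))+6M(n-1,k)$, combined with $\binom nk=\binom{n-1}k+\binom{n-1}{k-1}$, where $M(n-1,k)=\#\{(A_1,A_2,A_3,A_4)\in(\F_2^{n-1})^4:\sum A_i=0,\ |A_1|=|A_2|=k-1,\ |A_3|=|A_4|=k\}$.

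\smallskip
\noindent\textbf{The cross term: the main obstacle.} Here the naive estimate $\E[Q^2R^2]\leq\sqrt{\E[Q^4]\E[R^4]}$ is fatally lossy, because at the extremal pair $Q=e_{k-1}$, $R=e_k$ the functions $Q^2,R^2$ are far from proportional, so Cauchy–Schwarz overshoots the needed constant. What one actually needs is $\E[Q^2R^2]\leq\frac{M(n-1,k)}{\binom{n-1}{k-1}\binom{n-1}{k}}\,\E[Q^2]\,\E[R^2]$; equivalently, $e_{k-1},e_k$ should maximize $\|QR\|_2/(\|Q\|_2\|R\|_2)$ over homogeneous polynomials of degrees $k-1,k$. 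I would obtain this by strengthening the inductive hypothesis to a whole family of \emph{mixed fourth-moment inequalities}: for every pair of degrees $(a,b)$,
\[
\E[P^2Q^2]\ \leq\ \frac{M_{a,b}(n)}{\binom na\binom nb}\,\E[P^2]\,\E[Q^2]\qquad (P,Q\ \text{homogeneous of degrees }a,b),
\]
with $e_a,e_b$ extremal; the theorem is $a=b=k$, and the cross-term bound is $(a,b)=(k-1,k)$ one dimension down. Peeling a coordinate from such a statement produces, besides four cross terms of the same type (handled by the hypothesis), one genuinely four-fold term $\E[P_1P_0Q_1Q_0]$ where $P=z_nP_1+P_0$, $Q=z_nQ_1+Q_0$. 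Controlling this four-fold term — by a suitably chosen Cauchy–Schwarz applied together with the identities relating $M_{a,b}$, $E(S(n,k))$ and binomial coefficients (which also force the various quadratic forms in the induction step to be perfect squares, vanishing precisely at the elementary symmetric polynomials) — is the real technical heart of the argument, and the step I expect to be the most delicate.
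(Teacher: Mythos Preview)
Your reduction to a sharp fourth-moment inequality for homogeneous multilinear polynomials on $\{-1,1\}^n$ is correct and clean, but the inductive scheme does not close, and you essentially say so yourself. After peeling a coordinate from the strengthened hypothesis $\E[P^2Q^2]\leq C_{a,b}(n)\,\E[P^2]\E[Q^2]$ you are left with the genuinely four-fold term $\E[P_1P_0Q_1Q_0]$, for which you offer only ``a suitably chosen Cauchy--Schwarz.'' But any Cauchy--Schwarz splitting of that term is \emph{strict} at the conjectured extremizer once $a\neq b$: for instance with $X=P_1Q_0=e_{a-1}e_b$ and $Y=P_0Q_1=e_a e_{b-1}$ one has $X\not\propto Y$ (already for $(a,b)=(1,2)$ in three variables, $e_0e_2=e_2$ while $e_1^2=3+2e_2$). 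A bound that is loose at the extremizer cannot possibly yield a sharp inequality after summation, so the quadratic form you hope will be a perfect square will in fact have the wrong sign somewhere. Strengthening further to a hypothesis over all degree four-tuples only pushes the same obstruction one level deeper. As written this is an outline with the decisive step missing, not a proof.

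For contrast, the paper avoids Fourier and induction altogether. Working on the spatial side, it defines for each pair $i<j$ a compression $f^{(ij)}$ that replaces the two values $f(v+e_i),\,f(v+e_j)$ (for $|v|=k-1$) by their common $\ell^2$ average. This visibly preserves the support on $S(n,k)$ and the $\ell^2$ norm; the content is that it does not decrease $\|f\|_{u_2}$, which one checks by grouping the quadruples $(a_1,a_2,a_3,a_4)$ in the $u_2$ sum according to how many of the projections $\pi_{ij}(a_t)$ have weight $k-1$, and applying Cauchy--Schwarz inside each group (where it \emph{is} tight at the symmetric point). Since the Hamming sphere is connected under the moves $v\mapsto v+e_i+e_j$, any non-constant extremizer admits a pair $(i,j)$ with $f\neq f^{(ij)}$, and there the inequality is strict, contradicting extremality. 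This symmetrisation argument is short and sidesteps exactly the cross-term bookkeeping that stalls your induction; if you want to rescue the inductive approach, the missing ingredient is a sharp bound on $\E[P_1P_0Q_1Q_0]$ that is \emph{tight} at the elementary symmetric polynomials, and Cauchy--Schwarz alone will not supply it.
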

The purpose of this note is to establish this conjecture.
\begin{theorem}\label{thm:the_theorem}\leavevmode
Let $A = S(n, k)$. Then, $\mu(A) = \frac{1}{N}\frac{E(A)}{|A|^2}$.
\end{theorem}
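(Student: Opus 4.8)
The plan is to pass to the Fourier side and induct on $n$. With $\hat f(\xi)=\E_x f(x)(-1)^{x\cdot\xi}$ one has $\|f\|_{u_2}^4=\sum_\xi\hat f(\xi)^4$ and $\|f\|_2^2=\sum_\xi\hat f(\xi)^2$, so Theorem~\ref{thm:the_theorem} is equivalent to
\[
\sum_\xi\hat f(\xi)^4\ \le\ \mu_{n,k}\Big(\sum_\xi\hat f(\xi)^2\Big)^2,\qquad \mu_{n,k}:=\frac1N\frac{E(S(n,k))}{\binom nk^2},
\]
for every $f$ supported on $S(n,k)$, where $\mu_{n,k}$ is the value attained by the constant function. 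When the sphere is a single point this is trivial (then $\mu=1/N$ and $f$ is a multiple of a delta), so that is the base case.

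For the inductive step, pick a coordinate and write $f=f_0+f_1$, where $f_0$ is the part supported on vectors having that coordinate $0$ — read as a function on $S(n-1,k)\subseteq\F_2^{n-1}$ — and $f_1$ the part with that coordinate $1$, a function on $S(n-1,k-1)$. Splitting the defining sum of $\|\cdot\|_{u_2}$ by the four values of that coordinate yields the exact identities
\[
\|f\|_{u_2}^4=\tfrac1{16}\big(\|f_0+f_1\|_{u_2}^4+\|f_0-f_1\|_{u_2}^4\big),\qquad
\|f\|_2^2=\tfrac12\big(\|f_0\|_2^2+\|f_1\|_2^2\big),
\]
all norms on the right in $\F_2^{n-1}$. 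Expanding the fourth powers gives $\|f_0+f_1\|_{u_2}^4+\|f_0-f_1\|_{u_2}^4=2\|f_0\|_{u_2}^4+2\|f_1\|_{u_2}^4+12\sum_\xi\hat f_0(\xi)^2\hat f_1(\xi)^2$; the induction hypothesis bounds the first two terms by $\mu_{n-1,k}\|f_0\|_2^4$ and $\mu_{n-1,k-1}\|f_1\|_2^4$, and the last term is the non‑negative cross‑energy $\langle f_0\star f_0,\ f_1\star f_1\rangle$. If this is bounded by $c_{n-1,k}\|f_0\|_2^2\|f_1\|_2^2$, with $c_{n-1,k}$ the value it takes when $f_0,f_1$ are the two sphere indicators, then setting $P=\|f_0\|_2^2$, $Q=\|f_1\|_2^2$ the whole statement collapses to the elementary assertion that the quadratic form
\[
2\mu_{n,k}(P+Q)^2-\mu_{n-1,k}P^2-\mu_{n-1,k-1}Q^2-6\,c_{n-1,k}\,PQ
\]
is non‑negative for all $P,Q\ge 0$. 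Since every constant appearing is exactly the value produced by the extremal configuration, this is a purely numerical identity among the energies $E(S(m,j))$; in the small cases it comes out a perfect square (e.g.\ $\tfrac1{12}(P-2Q)^2$ when $n=3,k=1$), and I would expect this to persist.

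The real content is therefore the sharp cross‑energy bound $\langle f_0\star f_0,f_1\star f_1\rangle\le c_{n-1,k}\|f_0\|_2^2\|f_1\|_2^2$ with the \emph{exact} constant: crude estimates (Cauchy–Schwarz, or bounding by $\|\widehat{f_1}\|_\infty^2\|f_0\|_2^2$) have the right order but the wrong constant once $k\ge 2$, so they would not recover $\mu_{n,k}$. To obtain the sharp constant I would fold this into the induction, proving simultaneously a family of statements asserting that for $f$ on $S(n,j)$ and $g$ on $S(n,j')$ the cross‑energy $\langle f\star f,g\star g\rangle$ — and, as the recursion forces, certain mixed Gowers inner products $\sum_\xi\hat f_0\hat f_1\hat g_0\hat g_1$ between pieces on four nearby spheres — is maximized, relative to the product of $\ell^2$ norms, by indicator functions. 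Splitting a coordinate in both $f$ and $g$ rewrites such a cross‑energy via four cross‑energies on spheres one dimension down plus one mixed term, and again reduces matters to the non‑negativity of an explicit quadratic form in the squared $\ell^2$ norms of the four pieces. The main obstacle is exactly this bookkeeping: pinning down the right closed family of cross‑quantities, handling the signs in the mixed terms, and checking that the relevant combinations of the Krawtchouk sums $\sum_i\binom ni K_a(i)K_b(i)K_c(i)K_d(i)$ (which is what all the constants are) assemble into manifestly non‑negative quadratic forms.
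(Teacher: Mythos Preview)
Your approach is genuinely different from the paper's. The paper argues by compression: for each pair $i<j$ it replaces the two values $f(x)$ and $f(x+e_i+e_j)$ (whenever these lie in the same Hamming sphere) by their common $\ell^2$ average, and checks, using only Cauchy--Schwarz on two-term sums applied term by term in the defining expectation for $\|f\|_{u_2}^4$, that this preserves the support and the $\ell^2$ norm while strictly increasing $\|f\|_{u_2}$ unless $f$ was already invariant under the swap $x\mapsto x+e_i+e_j$. Connectedness of $S(n,k)$ under these swaps then forces any maximiser to be constant. There is no induction on $n$, no passage to the Fourier side, and no numerical identity among sphere energies to verify.

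Your inductive scheme, by contrast, is not a proof as written, and the gap is exactly where you place it. The reduction is correct: splitting off a coordinate gives the stated identities, and everything does come down to (i) the sharp cross bound $\sum_\xi\hat f_0(\xi)^2\hat f_1(\xi)^2\le c_{n-1,k}\|f_0\|_2^2\|f_1\|_2^2$ and (ii) non-negativity of the displayed quadratic form in $P,Q$. But neither is established. For (ii) you check only $(n,k)=(3,1)$ and assert the pattern ``persists''; in general this is a nontrivial relation among the numbers $E(S(m,j))$ and is not obviously a perfect square. For (i), as you yourself note, Cauchy--Schwarz plus the inductive hypothesis yields only the constant $\sqrt{\mu_{n-1,k}\,\mu_{n-1,k-1}}\ge c_{n-1,k}$, which is too weak. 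Your proposed remedy --- enlarge the induction to a family of mixed Gowers inner products $\sum_\xi \hat g_1\hat g_2\hat g_3\hat g_4$ over several adjacent spheres --- is the natural move, but you have not specified the family, and there is a genuine risk it does not close: splitting a coordinate in a four-function inner product produces new mixed types, and unless you exhibit a \emph{finite} list of statements stable under the recursion and verify the corresponding quadratic forms, there is no induction. Until that bookkeeping is actually carried out, what you have is a programme rather than a proof; the paper's compression argument avoids all of these difficulties.
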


\begin{remark*}
Kirshner and Samorodnitsky define $\mu(A)$ as the maximal value of $\frac{||f||_4^4}{||f||_2^4}$ among functions whose Fourier transform is supported on $A$. However, it can easily be seen that these two formulations are equivalent (up to normalisation) by taking a Fourier transform, and using Parseval's identity and the relation that $||f||_4^4 = ||\hat{f}||_{u_2}^4$.
\end{remark*}

\section{Proof of Theorem \ref{thm:the_theorem}}

Throughout the proof, let $e_1, \dots, e_n$ denote the standard basis for $\F_2^n$, so that any element of $\F_2^n$ may be written $\sum_i \eps_i e_i$, where $\eps_i \in \{0, 1\}$. If $v, w \in \F_2^n$, let $\ip{v, w}$ denote the standard inner product of $v \nd w$. In other words,
\[
\ip{\sum_{i} \eps^{(1)}_i e_i, \sum_{i} \eps^{(2)}_i e_i} = \sum_i \eps^{(1)}_i\eps^{(2)}_i.
\]

Our approach for proving Theorem \ref{thm:the_theorem} is loosely inspired by the idea to consider compressions as in \cite{GreenTaoFreiman}, though the actual compressions we consider are different. 
\begin{definition}\label{defn:compression}
For a function $f : \F_2^n \rightarrow \R$ and $i < j \leq n$, define the $i, j$ compression $f^{(ij)}$ as follows:
\[
f^{(ij)}(x) = 
\begin{cases}
	f(x) & \ip{x, e_i + e_j} = 0 \\
	\sqrt{\frac{f(x)^2 + f(x + e_i + e_j)^2}{2}} & \text{otherwise},
\end{cases}
\]
In other words, let $\pi_{ij} : \F_2^n \rightarrow \F_2^{(n-2)}$ denote the projection given by ignoring the coefficients of $e_i \nd e_j$. Then, $f^{(ij)}(x)$ is the $\ell^2$-average of $f$ over elements of the coset of $\ker \pi_{ij}$ containing $x$, which have the same Hamming weight as $x$.
\end{definition}

The proof of Theorem \ref{thm:the_theorem} relies on the following lemma about compressions.
\begin{lemma}\label{lem:compression}
Let $A = S(n, k)$, and suppose that $f$ is supported on $A$.
\begin{enumerate}
\item\label{item:cl:support} $f^{(ij)}$ is also supported on $A$.

\item\label{item:cl:l2} $||f^{(ij)}||_2 = ||f||_2$.

\item\label{item:cl:u2} $||f^{(ij)}||_{u_2} \geq ||f||_{u_2}$.

\item\label{item:cl:sharp} $||f^{(ij)}||_{u_2} > ||f||_{u_2}$ unless $f = f^{(ij)}$.
\end{enumerate}
\end{lemma}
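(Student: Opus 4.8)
The plan is to obtain (1) and (2) by a direct computation on the cosets of $\ker\pi_{ij}$, and to extract (3) and (4) from a pointwise inequality between the additive energies of $f$ and $f^{(ij)}$ produced by separating off the two coordinates $i,j$. Parts (1) and (2) should be immediate: on each $4$-element coset $x+\ker\pi_{ij}$ the definition makes $f^{(ij)}$ an $\ell^2$-average over the members having the Hamming weight of $x$, so $\supp f^{(ij)}\subseteq\supp f\subseteq S(n,k)$, and adding up $f^{(ij)}(\cdot)^2$ coset by coset returns $\sum_x f(x)^2$. For (3) and (4) I would first reduce to $f\geq 0$: replacing $f$ by $|f|$ keeps $||f||_2$ and $\supp f$, cannot decrease $||f||_{u_2}$ (because $|\sum_x f(x)f(x+t)|\leq\sum_x|f(x)||f(x+t)|$), and $f\geq 0$ is the only case needed in Theorem~\ref{thm:the_theorem}.

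So assume $f\geq 0$. Splitting off the coordinates $i,j$, write $x=(y,u,v)$ with $y\in\F_2^{n-2}$, $(u,v)=(x_i,x_j)$, and let $A,B,C,D:\F_2^{n-2}\to\R$ denote the slices $f(\cdot,0,0),f(\cdot,0,1),f(\cdot,1,0),f(\cdot,1,1)$; then $f^{(ij)}$ has slices $A,M,M,D$ with $M=\sqrt{(B^2+C^2)/2}$ pointwise. Writing $r_h(t)=\sum_x h(x)h(x+t)$ (so $||h||_{u_2}^4=N^{-3}\sum_t r_h(t)^2$) and grouping $t=(s,p,q)$, $s\in\F_2^{n-2}$, by $(p,q)\in\F_2^2$, one gets
\[
\sum_t r_f(t)^2=\sum_{s}\big(P(s)^2+4Q(s)^2+4R(s)^2+4U(s)^2\big),
\]
where, with $r_{XY}(s)=\sum_y X(y)Y(y+s)$, $P=r_{AA}+r_{BB}+r_{CC}+r_{DD}$, $Q=r_{AB}+r_{CD}$, $R=r_{AC}+r_{BD}$, $U=r_{AD}+r_{BC}$; the same identity for $f^{(ij)}$ has $P'=r_{AA}+2r_{MM}+r_{DD}$, $Q'=R'=r_{AM}+r_{MD}$, $U'=r_{AD}+r_{MM}$, so its right side is $\sum_s(P'^2+8Q'^2+4U'^2)$.

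Because $A,B,C,D,M\geq 0$, all $r_{XY}(s)\geq 0$. Cauchy--Schwarz termwise in $y$ gives $2r_{MM}(s)\geq r_{BB}(s)+r_{CC}(s)$ and $r_{MM}(s)\geq r_{BC}(s)$, hence $P'\geq P\geq 0$ and $U'\geq U\geq 0$, so $P'^2\geq P^2$ and $U'^2\geq U^2$ pointwise. The crucial step is $2Q'(s)^2\geq Q(s)^2+R(s)^2$: Minkowski's inequality gives $r_{AM}(s)=\tfrac1{\sqrt2}\sum_y\sqrt{(A(y)B(y+s))^2+(A(y)C(y+s))^2}\geq\tfrac1{\sqrt2}\sqrt{r_{AB}(s)^2+r_{AC}(s)^2}$ and likewise $r_{MD}(s)\geq\tfrac1{\sqrt2}\sqrt{r_{BD}(s)^2+r_{CD}(s)^2}$, while a further Cauchy--Schwarz gives $r_{AM}(s)r_{MD}(s)\geq\tfrac12\big(r_{AB}(s)r_{CD}(s)+r_{AC}(s)r_{BD}(s)\big)$; expanding $Q'^2=(r_{AM}+r_{MD})^2$ and inserting these yields $2Q'^2\geq(r_{AB}+r_{CD})^2+(r_{AC}+r_{BD})^2=Q^2+R^2$. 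Summing over $s$ proves (3). For (4), equality in (3) forces equality in each of the pointwise inequalities above for every $s$; in particular $U'(s)=U(s)$, i.e. $r_{MM}(s)=r_{BC}(s)$, and at $s=0$ this says $\sum_y\tfrac12(B(y)^2+C(y)^2)=\sum_y B(y)C(y)$, hence $\sum_y(B(y)-C(y))^2=0$, so $B=C$; and $B=C$ is exactly the condition $f=f^{(ij)}$.

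The genuinely delicate point is the inequality $2Q'^2\geq Q^2+R^2$ — arranging Minkowski and Cauchy--Schwarz so that the cross terms pair up correctly — together with being careful that the reduction to $f\geq 0$ is legitimate, since it is non-negativity of all the $r_{XY}$ that lets one pass from the pointwise bounds on $P,Q,R,U$ to bounds on their squares.
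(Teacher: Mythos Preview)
Your argument is correct (for $f\geq 0$) and, like the paper's, isolates the two coordinates $i,j$ and rests on Cauchy--Schwarz, but the organization is genuinely different. The paper groups the sum defining $||f||_{u_2}^4$ by the projections $b_t=\pi_{ij}(a_t)$ and splits into three cases according to how many of the $b_t$ have Hamming weight $k-1$, handling each by an elementary Cauchy--Schwarz on the pairs $f(b_t+e_i),\,f(b_t+e_j)$. You instead pass through the autocorrelation identity $||f||_{u_2}^4=N^{-3}\sum_t r_f(t)^2$, slice by the $(i,j)$-part of $t$, and compare the resulting quadratic expressions in the $r_{XY}$ pointwise in $s$; this is more systematic and packages the hardest step into the single inequality $2Q'^2\geq Q^2+R^2$, at the price of the Minkowski-plus-Cauchy--Schwarz combination you describe. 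Your equality analysis for (4) via $r_{MM}(0)=r_{BC}(0)$ is also cleaner than the paper's, which inspects the diagonal term $b_1=b_2=b_3=b_4$. On the reduction to $f\geq 0$: your caveat is well taken --- part~(4) as stated can actually fail for signed $f$ (take $n=2$, $k=1$, $f(e_1)=1$, $f(e_2)=-1$, where $||f||_{u_2}=||f^{(12)}||_{u_2}$ but $f\neq f^{(12)}$), so the restriction is not merely convenient but essential, and the paper's own case analysis tacitly uses nonnegativity as well.
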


\begin{proof}
The proofs of (\ref{item:cl:support}) and (\ref{item:cl:l2}) follow immediately from Definition \ref{defn:compression}.

For (\ref{item:cl:u2}), observe that we may rewrite (\ref{eqn:Gowers_norm_definition}) as follows.
\begin{equation}\label{eqn:Gowers_norm_rewritten}
||f||_{u_2}^4 = \frac{1}{4^3}\E_{\substack{b_1, b_2, b_3, b_4 \in \pi_{ij}(\F_2^n) \\ b_1 + b_2 = b_3 + b_4}} \left(\sum_{\substack{a_t \in \pi_{ij}^{-1}(b_i) \cap A \\ a_1 + a_2 = a_3 + a_4}} f(a_1) f(a_2) f(a_3) f(a_4)\right),
\end{equation}
where the outer expectation is over cosets of $\ker \pi_{ij}$, and the factor of $\frac{1}{4^3}$ comes from the fact that we have renormalised the inner expectation to be a summation. Our strategy will be to prove that each bracketed term on the right hand side of (\ref{eqn:Gowers_norm_rewritten}) does not decrease when we pass from $f$ to $f^{(ij)}$.

Observe that, if $f$ is supported on $A$, then the outer expectation of (\ref{eqn:Gowers_norm_rewritten}) may be restricted to terms such that each $b_t$ has Hamming weight either $k, k-1$ or $k-2$, and the size of $\pi_{ij}^{-1}(b_t) \cap A$ depends on whether $b_t$ has weight $k-1$ or not. Thus, we split naturally into three cases.

\emph{Case 1: None of $b_1, b_2, b_3$ or $b_4$ has Hamming weight $k-1$.} 
In this case, the bracketed term is a sum over exactly one term, and is unchanged as we pass from $f$ to $f^{(ij)}$.

\emph{Case 2: Exactly two of $b_1, b_2, b_3 \nd b_4$ have Hamming weight $k-1$.}
Without loss of generality, it is $b_1 \nd b_2$ which have Hamming weight $k-1$. Then, there are two possibilities for the bracketed term, depending on how many of $b_3$ and $b_4$ have weight $k-2$. If neither or both of them do, then we may write the bracketed term as
\[
f(b_1 + e_i) f(b_2 + e_i) f(a_3) f(a_4) + f(b_1 + e_j) f(b_2 + e_j) f(a_3) f(a_4),
\]
where $a_3$ denotes the unique element of $\pi_{ij}^{-1}(b_3) \cap A$ (and likewise for $a_4$). The conclusion then follows from the assertion that 
\[
f(b_1 + e_i) f(b_2 + e_i) + f(b_1 + e_j) f(b_2 + e_j) \leq 2 f^{(ij)}(b_1 + e_i) f^{(ij)}(b_2 + e_i),
\]
which is a consequence of the Cauchy Schwarz inequality. A similar argument applies if exactly one of $b_3 \nd b_4$ have weight $k-2$.

\emph{Case 3: All four of $b_1, b_2, b_3 \nd b_4$ have Hamming weight $k-1$.}
In this case, the bracketed term is now a sum of eight terms. One of the terms is \[f(a_1)f(a_2)f(a_3)f(a_4) = f(b_1 + e_i)f(b_2 + e_i)f(b_3 + e_i)f(b_4 + e_i),\] and the others can be obtained by replacing two or four of the $e_i$ with $e_j$.

Group the terms into four pairs, according to the values of $a_3 \nd a_4$. If $a_3 = b_3 + e_1 \nd a_4 = b_4 + e_1$, for example, then we have
\begin{align*}
f(b_1 + e_i)f(b_2 + e_i)&f(a_3)f(a_4) + f(b_1 + e_j)f(b_2 + e_j)f(a_3)f(a_4) \\
&\leq f^{(ij)}(a_1) f^{(ij)}(a_2)f(a_3)f(a_4),
\end{align*}
as in case 2. The conclusion then follows from the fact that 
\[
f(b_3 + e_i) + f(b_3 + e_j) \leq 2f^{(ij)}(a_3),
\]
which follows from Cauchy-Schwarz.

Finally, it remains to prove (\ref{item:cl:sharp}). But this is easy to do. Suppose that $f \neq f^{(ij)}$; in other words, there is some vector $v$ of weight $k-1$, such that $f(v + e_i) \neq f(v + e_j)$. Then, consider the term of (\ref{eqn:Gowers_norm_rewritten}) coming from $b_1 = b_2 = b_3 = b_4 = v$. It is easy to see that equality will not hold in the relation 
\[\sum f(a_1)f(a_2)f(a_3)f(a_4) \leq 8 f^{(ij)}(a_1)f^{(ij)}(a_2)f^{(ij)}(a_3)f^{(ij)}(a_4). \qedhere \]
\end{proof}

We can now complete the proof of Theorem \ref{thm:the_theorem}. By compactness, there must exist some function $f$ achieving the maximal value of $||f||_{u_2}$, for fixed $||f||_2$. Suppose that this maximal value is achieved for a function $f$ which is not constant. 

Consider the Hamming Sphere as a graph, where we join two elements $v \nd w$ with an edge if and only if $w = v + e_i + e_j$ for some $i \nd j$. Then, the Hamming Sphere is connected. Thus, there must be two adjacent elements $v \nd w$ for which $f(v) \neq f(w)$. 

Thus, if $w = v + e_i + e_j$, then $f \neq f^{(ij)}$, and so Lemma \ref{lem:compression} (\ref{item:cl:sharp}) tells us that $||f||_{u_2} < ||f^{(ij)}||_{u_2}$, contradicting the maximality of $||f||_{u_2}$. 

Therefore, $f$ must be constant, yielding Theorem \ref{thm:the_theorem}. \qed

\addtocontents{toc}{\protect\vspace*{\baselineskip}}


\addtocontents{toc}{\protect\vspace*{\baselineskip}}

\addcontentsline{toc}{section}{References}

\bibliographystyle{plainnat}

\begin{thebibliography}{2}
\providecommand{\natexlab}[1]{#1}
\providecommand{\url}[1]{\texttt{#1}}
\expandafter\ifx\csname urlstyle\endcsname\relax
  \providecommand{\doi}[1]{doi: #1}\else
  \providecommand{\doi}{doi: \begingroup \urlstyle{rm}\Url}\fi

\bibitem[Green and Tao(2009)]{GreenTaoFreiman}
Ben Green and Terence Tao.
\newblock Freiman's theorem in finite fields via extremal set theory.
\newblock \emph{Combin. Probab. Comput.}, 18\penalty0 (3):\penalty0 335--355,
  2009.
\newblock ISSN 0963-5483.
\newblock \doi{10.1017/S0963548309009821}.
\newblock URL \url{https://doi.org/10.1017/S0963548309009821}.

\bibitem[Kirshner and Samorodnitsky(2018)]{KSpaper}
Naomi Kirshner and Alex Samorodnitsky.
\newblock On $\ell_4:\ell_2 $ ratio of functions with restricted fourier
  support.
\newblock \emph{arXiv preprint arXiv:1801.08507}, 2018.

\end{thebibliography}

\end{document}